\documentclass[11pt]{amsart}

\usepackage[T1]{fontenc}
\usepackage[utf8]{inputenc}
\usepackage{lmodern}
\usepackage{amsmath,amssymb,amsthm,mathtools}
\usepackage{enumitem}
\usepackage{hyperref}

\hypersetup{
  colorlinks=true,
  linkcolor=blue,
  citecolor=blue,
  urlcolor=blue
}

\theoremstyle{plain}
\newtheorem{theorem}{Theorem}[section]
\newtheorem{proposition}[theorem]{Proposition}
\newtheorem{corollary}[theorem]{Corollary}
\newtheorem{lemma}[theorem]{Lemma}

\theoremstyle{definition}
\newtheorem{definition}[theorem]{Definition}
\newtheorem{example}[theorem]{Example}

\theoremstyle{remark}
\newtheorem{remark}[theorem]{Remark}

\newcommand{\Lip}{\operatorname{Lip}}
\newcommand{\lip}{\operatorname{lip}}
\newcommand{\supp}{\operatorname{supp}}

\newcommand{\F}{\mathcal F}
\newcommand{\N}{\mathbb N}
\newcommand{\R}{\mathbb R}

\title[A scalar $c_0$-criterion and gap-coordinate preduals]
{A scalar $c_0$-approximation criterion and gap-coordinate preduals for Lipschitz-free spaces}

\author{Luigi D'Onofrio}
\address{Dipartimento di Scienze e Tecnologie, Universit\`a degli Studi di Napoli Parthenope, Italy}
\email{luigi.donofrio@uniparthenope.it}

\subjclass[2020]{Primary 46B20, 46B04; Secondary 46B25, 46B50, 54E35}
\keywords{Lipschitz-free spaces, preduals, biduality, $c_0$-approximation, little Lipschitz functions, countable proper metric spaces}

\begin{document}

\begin{abstract}
We record a scalar $c_0$-approximation criterion for Banach spaces
canonically embedded into $\ell^\infty$ by a countable norming family.
It identifies the dual of the associated $c_0$-subspace with the prescribed
atomic predual precisely when bounded coordinatewise approximation by
$c_0$-elements is available. The formulation is a self-contained scalar
version of the atomic predual and two-stars framework of
D'Onofrio--Greco--Perfekt--Sbordone--Schiattarella.

We then apply the criterion to Lipschitz-free spaces. The weak-star density
of Lipschitz functions with bounded support is due to
Aliaga--Perneck\'a--Petitjean--Proch\'azka; in proper spaces this yields a
canonical quotient from the bidual of the norm-closed span of compactly
supported Lipschitz functions onto $\Lip_0(M)$.

The main concrete application concerns countable proper subsets
$K\subset [0,\infty)$ with the Euclidean metric. The connected components
$(a_j,b_j)$ of the complement of $K$ give gap coordinates
\[
        \Delta_j f=\frac{f(b_j)-f(a_j)}{b_j-a_j}.
\]
We prove that, in the infinite case, these coordinates identify $\Lip_0(K)$ with $\ell^\infty$,
that the corresponding $c_0$-subspace coincides with Dalet's predual
$S(K)$, and hence that
\[
        S(K)^*\cong \F(K),\qquad S(K)^{**}\cong \Lip_0(K)
\]
canonically and isometrically. This gives a coordinate $c_0$ realization,
for a non-discrete class with accumulation points, of the preduality covered
abstractly by Dalet's theorem for countable proper metric spaces. No new
proof of Dalet's theorem in full generality is claimed.
\end{abstract}

\maketitle

\section{Introduction}

Let $E$ be a Banach space and let $(f_n)_{n\in\N}\subset E^*$ be a
countable norming family. The coordinate map
\[
        V:E\longrightarrow \ell^\infty,\qquad Vx=(f_n(x))_{n\in\N},
\]
is an isometric embedding. It is natural to ask when the subspace
\[
        E_{c_0}:=\{x\in E:Vx\in c_0\}
\]
still determines $E$ by biduality. We give an elementary scalar answer. Let
\[
        V^\sharp:\ell^1\longrightarrow E^*,\qquad
        V^\sharp \xi=\sum_{n=1}^\infty \xi_n f_n,
\]
where $V^\sharp$ is the restriction to the canonical copy of $\ell^1$ in
$(\ell^\infty)^*$ of the Banach-space adjoint of $V$. Set
$Y=V^\sharp(\ell^1)$, endowed with the quotient norm inherited from
$\ell^1/\ker V^\sharp$. Assuming that the canonical map $E\to Y^*$ is an
isometric isomorphism, we prove that
\[
        E_{c_0}^*\cong Y
\]
canonically and isometrically if and only if every element of $E$ is the
coordinatewise limit, along the family $(f_n)$, of a bounded sequence in
$E_{c_0}$. This should be read as a scalar $c_0$-version of the more general
atomic predual and two-stars principle developed in \cite{DGPSS20} for
Banach spaces with supremum-type norms induced by families of operators.

The second part concerns Lipschitz-free spaces. If $(M,d,0)$ is a pointed
metric space, then $\Lip_0(M)$ is canonically the dual of the Lipschitz-free
space $\F(M)$. We use the standard notation
\[
        m_{x,y}=\frac{\delta(x)-\delta(y)}{d(x,y)}\qquad (x\ne y)
\]
for normalized molecules. A lemma of
Aliaga--Perneck\'a--Petitjean--Proch\'azka states that Lipschitz functions
with bounded support are weak-star dense in $\Lip_0(M)$ \cite[Lemma
2.2]{APPP20}. In a proper space, bounded closed sets are compact, so this
implies the weak-star density of compactly supported Lipschitz functions.
We recall the result and record the resulting quotient map
\[
        G_0(M)^{**}\twoheadrightarrow \Lip_0(M),
        \qquad
        G_0(M)=\overline{\Lip_{0,c}(M)}^{\|\cdot\|_{\Lip}}.
\]

The point where the scalar criterion gives a useful additional viewpoint is
coordinate selection. The naive family of all molecules is appropriate in
proper discrete spaces, but not in spaces with accumulation points: long
molecules approaching the same limiting configuration prevent the relevant
predual from being a $c_0$-coordinate space. We therefore introduce a simple
coordinate-admissibility formulation and then treat a non-discrete class
where the coordinates can be chosen explicitly.

Let $K\subset [0,\infty)$ be infinite, countable, closed, and contain the base point
$0$. Such $K$ is a proper metric space; the finite case is trivial. Let $\mathcal G(K)$ be the family of
bounded connected components $(a,b)$ of the complement of $K$ in its convex
hull. Since $K$ is countable, it has Lebesgue measure zero, and the gaps in
$\mathcal G(K)$ carry all one-dimensional length. For each gap $I=(a,b)$ we
consider the coordinate
\[
        \Delta_I f=\frac{f(b)-f(a)}{b-a}.
\]
We prove that
\[
        \|f\|_{\Lip}=\\sup_{I\in\mathcal G(K)} |\Delta_I f|,
\]
that $\F(K)$ is the $\ell^1$-sum of the corresponding gap molecules, and
that the subspace defined by $(\Delta_I f)_{I\in\mathcal G(K)}\in c_0$
coincides with Dalet's predual
\[
        S(K)=\left\{ f\in\lip_0(K):
        \lim_{r\to\infty}
        \sup_{\substack{x\text{ or }y\notin B(0,r)\\ x\ne y}}
        \frac{|f(x)-f(y)|}{|x-y|}=0
        \right\}.
\]
Consequently
\[
        S(K)^*\cong \F(K),\qquad S(K)^{**}\cong \Lip_0(K)
\]
isometrically. Dalet proved in full generality that $\F(M)$ is a dual space
and has the metric approximation property whenever $M$ is countable proper
\cite{Dal15b}; the present result does not extend that theorem. Rather, it
shows that for countable proper subsets of the real line, including spaces
with accumulation points, Dalet's predual is exactly a natural gap-coordinate
$c_0$-space.

All Banach spaces are real. Throughout the paper $B(x,r)$ denotes the
closed ball.

\section{\texorpdfstring{A scalar $c_0$-approximation criterion}{A scalar c0-approximation criterion}}

\begin{definition}\label{def:scalar-setting}
Let $E$ be a Banach space and let $(f_n)_{n\in\N}\subset E^*$ satisfy
\[
        \sup_{n\in\N}\|f_n\|\le 1,
        \qquad
        \|x\|=\sup_{n\in\N}|f_n(x)|\quad (x\in E).
\]
Define
\[
        V:E\longrightarrow \ell^\infty,
        \qquad
        Vx=(f_n(x))_{n\in\N}.
\]
Then $V$ is an isometric embedding. Define also
\[
        V^\sharp:\ell^1\longrightarrow E^*,
        \qquad
        V^\sharp\xi=\sum_{n=1}^\infty \xi_n f_n.
\]
Equivalently, $V^\sharp$ is the restriction of the adjoint
$V^*:(\ell^\infty)^*\to E^*$ to the canonical subspace
$\ell^1\subset (\ell^\infty)^*$.

Let
\[
        Y:=V^\sharp(\ell^1)\subset E^*,
\]
e equip $Y$ with the quotient norm inherited from $\ell^1/\ker V^\sharp$.
Finally, set
\[
        E_{c_0}:=\{x\in E:Vx\in c_0\}.
\]
\end{definition}

The space $E_{c_0}$ is closed in $E$. Indeed,
$V(E_{c_0})=V(E)\cap c_0$, and $V(E)$ is closed in $\ell^\infty$ because
$V$ is an isometry.

The canonical evaluation map
\[
        J:E\longrightarrow Y^*,\qquad Jx(y)=y(x),
\]
is contractive. In the criterion below it is assumed to be onto and
isometric.

\begin{theorem}[Scalar $c_0$-criterion]\label{thm:scalar-c0}
Assume, in the setting of Definition~\ref{def:scalar-setting}, that
\[
        J:E\longrightarrow Y^*
\]
is an isometric isomorphism. Then the following assertions are equivalent.

\begin{enumerate}[label=\textup{(\roman*)}]
\item The canonical restriction map
\[
        \Lambda:Y\longrightarrow E_{c_0}^*,
        \qquad
        \Lambda(y)=y|_{E_{c_0}},
\]
is an isometric isomorphism.

\item For every $x\in E$ there exists a sequence $(x_k)\subset E_{c_0}$
such that
\[
        \sup_k\|x_k\|<\infty
        \qquad\text{and}\qquad
        f_n(x_k)\longrightarrow f_n(x)
        \quad(n\in\N).
\]
\end{enumerate}

When these conditions hold, the sequence in \textup{(ii)} may be chosen
with $\sup_k\|x_k\|\le \|x\|$. Moreover, on bounded sequences in $E$,
coordinatewise convergence on the family $(f_n)$ is equivalent to
convergence in $\sigma(E,Y)$.
\end{theorem}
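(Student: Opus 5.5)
The plan is to show first that $\Lambda$ is always a contractive surjection, so that the whole content of (i) is the injectivity of $\Lambda$. After that, (i)$\Leftrightarrow$(ii) reduces to a density statement in $\sigma(E,Y)$, and this topology is the coordinatewise one on bounded sets.

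\emph{Step 1: $\Lambda$ is contractive and onto.} For $y=V^\sharp\xi$ and $x\in E$,
\[
|y(x)|\le\sum_n|\xi_n||f_n(x)|\le\|\xi\|_1\|x\|.
\]
Taking the infimum over representatives $\xi$ gives $\|y\|_{E^*}\le\|y\|_Y$, and hence $\|\Lambda y\|\le\|y\|_Y$. For surjectivity, note that $V$ maps $E_{c_0}$ isometrically into $c_0$. Given $\varphi\in E_{c_0}^*$, transport it to $V(E_{c_0})\subset c_0$ and extend it by Hahn--Banach to $c_0$ with the same norm. Since $c_0^*=\ell^1$, this extension is given by some $\xi\in\ell^1$ with $\|\xi\|_1=\|\varphi\|$. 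Then $\Lambda(V^\sharp\xi)=\varphi$ and $\|V^\sharp\xi\|_Y\le\|\varphi\|$.

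Consequently, if $\Lambda$ is injective it is automatically isometric. Indeed, given $y$, the element $V^\sharp\xi$ just constructed for $\varphi=\Lambda y$ must equal $y$, so $\|y\|_Y\le\|\Lambda y\|$. Thus (i) is equivalent to: every $y\in Y$ vanishing on $E_{c_0}$ is zero in $Y$. Because $Y$ is a subspace of $E^*$, being zero in $Y$ means $y=0$ as a functional, equivalently $\xi\in\ker V^\sharp$.

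\emph{Step 2: the topology.} Finitely supported sequences are dense in $\ell^1$ and $\|f_n\|_Y\le 1$, so $\operatorname{span}\{f_n\}$ is norm dense in $Y$. A standard $\varepsilon/3$ argument then shows that on bounded subsets of $E$, $\sigma(E,Y)$-convergence coincides with convergence of every coordinate $f_n$. It follows that bounded sets are metrizable for $\sigma(E,Y)$, for instance by the metric
\[
\rho(x,x')=\sum_n 2^{-n}|f_n(x-x')|.
\]
This already proves the final sentence of the theorem.

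\emph{Step 3: (ii)$\Rightarrow$(i).} Let $y=V^\sharp\xi$ annihilate $E_{c_0}$, let $x\in E$, and take $(x_k)$ as in (ii). Since $|\xi_nf_n(x_k)|\le|\xi_n|\sup_k\|x_k\|$, dominated convergence gives
\[
0=y(x_k)=\sum_n\xi_nf_n(x_k)\longrightarrow\sum_n\xi_nf_n(x)=y(x).
\]
So $y=0$ in $E^*$, and Step 1 gives (i).

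\emph{Step 4: (i)$\Rightarrow$(ii), including the norm bound $\|x_k\|\le\|x\|$.} Under (i), $\Lambda^*\colon E_{c_0}^{**}\to Y^*$ is an isometric isomorphism, and composing with $J^{-1}$ identifies $E_{c_0}^{**}$ with $E$. I must check that this identification restricts to the inclusion on $E_{c_0}$: for $z\in E_{c_0}$, $\Lambda^*(\hat z)(y)=(\Lambda y)(z)=y(z)=Jz(y)$. Under the identification, the weak$^*$ topology of $E_{c_0}^{**}$ becomes $\sigma(E,Y)$. Goldstine's theorem then says that the ball of radius $\|x\|$ in $E_{c_0}$ is $\sigma(E,Y)$-dense in the ball of radius $\|x\|$ in $E$. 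By the metrizability from Step 2, nets can be replaced by sequences, which yields (ii) with $\sup_k\|x_k\|\le\|x\|$.

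The main delicate point is Step 1: one has to notice that surjectivity is free via Hahn--Banach into $c_0$, and that the quotient norm on $Y$ is what makes injectivity upgrade to an isometry. I would handle it first, since the rest is routine once that observation is in place.
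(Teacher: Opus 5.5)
Your proof is correct and takes essentially the same route as the paper. Your Step~1 is the Hahn--Banach content behind the paper's identification $E_{c_0}^*\cong\ell^1/W^\perp$ with $W=V(E_{c_0})$, so (i) reduces to $W^\perp=\ker V^\sharp$ (your injectivity of $\Lambda$), which both arguments prove by the same dominated-convergence step; for (i)$\Rightarrow$(ii) both apply Goldstine transported through $J^{-1}\Lambda^*$, with your coordinate metric $\rho$ doing the job of the paper's dense sequence in $B_Y$ when passing from nets to sequences.
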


\begin{proof}
Set
\[
        W:=V(E_{c_0})=V(E)\cap c_0\subset c_0.
\]
The restriction $V:E_{c_0}\to W$ is an isometric isomorphism. Hence
\[
        E_{c_0}^*\cong W^*.
\]
Since $W$ is a closed subspace of $c_0$, the classical duality of subspaces
of $c_0$ gives
\[
        W^*\cong \ell^1/W^\perp,
\]
where
\[
        W^\perp=\left\{\xi\in\ell^1:
        \sum_{n=1}^\infty \xi_n w_n=0\text{ for every }w=(w_n)\in W
        \right\}.
\]
On the other hand, by definition of the quotient norm on $Y$,
\[
        Y\cong \ell^1/\ker V^\sharp.
\]
We first observe that
\[
        \ker V^\sharp\subset W^\perp.
\]
Indeed, if $\xi\in\ker V^\sharp$ and $w=Vx\in W$ with $x\in E_{c_0}$, then
\[
        \sum_{n=1}^\infty \xi_n w_n
        =\sum_{n=1}^\infty \xi_n f_n(x)
        =V^\sharp\xi(x)=0.
\]
Therefore the canonical quotient map
$\ell^1/\ker V^\sharp\to \ell^1/W^\perp$ is contractive; it is precisely
$\Lambda:Y\to E_{c_0}^*$ under the identifications above.

Assume now \textup{(ii)}. Let $\xi\in W^\perp$ and let $x\in E$. Choose
$(x_k)\subset E_{c_0}$ as in \textup{(ii)}, say $\sup_k\|x_k\|\le M$. For
every $k$,
\[
        0=\sum_{n=1}^\infty \xi_n f_n(x_k),
\]
because $Vx_k\in W$ and $\xi$ annihilates $W$. Since
\[
        |\xi_n f_n(x_k)|\le M|\xi_n|,
        \qquad \xi=(\xi_n)\in \ell^1,
\]
the dominated convergence theorem for series yields
\[
        \sum_{n=1}^\infty \xi_n f_n(x)
        =\lim_{k\to\infty}\sum_{n=1}^\infty \xi_n f_n(x_k)=0.
\]
Thus $V^\sharp\xi=0$, and therefore $W^\perp\subset \ker V^\sharp$.
Consequently
\[
        W^\perp=\ker V^\sharp.
\]
It follows that
\[
        E_{c_0}^*\cong W^*\cong \ell^1/W^\perp
        =\ell^1/\ker V^\sharp\cong Y
\]
canonically and isometrically. This proves \textup{(i)}.

Conversely, assume \textup{(i)}. Then
$\Lambda:Y\to E_{c_0}^*$ is an isometric isomorphism, and hence its adjoint
\[
        \Lambda^*:E_{c_0}^{**}\longrightarrow Y^*
\]
is an isometric isomorphism. Let $x\in E$ and set
\[
        \widetilde x=(\Lambda^*)^{-1}(Jx)\in E_{c_0}^{**}.
\]
Then $\|\widetilde x\|=\|x\|$. By Goldstine's theorem, there exists a net
$(u_\alpha)\subset E_{c_0}$ such that
\[
        \sup_\alpha\|u_\alpha\|\le \|x\|,
        \qquad
        \kappa_{E_{c_0}}(u_\alpha)\xrightarrow{w^*}\widetilde x
        \quad\text{in }E_{c_0}^{**}.
\]
For $u\in E_{c_0}$ and $y\in Y$ one has
\[
        \langle \Lambda^*\kappa_{E_{c_0}}(u),y\rangle
        =\langle \kappa_{E_{c_0}}(u),\Lambda y\rangle
        =(\Lambda y)(u)=y(u)=\langle Ju,y\rangle.
\]
Hence $\Lambda^*\kappa_{E_{c_0}}(u)=Ju$. Applying $\Lambda^*$ to the
convergent net gives
\[
        Ju_\alpha\xrightarrow{\sigma(Y^*,Y)}Jx.
\]

Since $Y$ is a quotient of $\ell^1$, it is separable. Let $(y_j)_{j\ge1}$
be dense in the unit ball of $Y$. We choose a sequence from the net as
follows. For each $k$ choose an index $\alpha_k$ such that
\[
        |(Ju_{\alpha_k}-Jx)(y_j)|<\frac1k,
        \qquad 1\le j\le k.
\]
This is possible because the net converges to $Jx$ in $\sigma(Y^*,Y)$.
Set $x_k=u_{\alpha_k}$. Then $\sup_k\|x_k\|\le\|x\|$. If $y\in Y$ and
$\varepsilon>0$, choose $j$ such that $\|y-y_j\|$ is sufficiently small;
using the uniform boundedness of $(Jx_k)$ and $Jx$, one obtains
\[
        (Jx_k)(y)\longrightarrow (Jx)(y).
\]
Thus $x_k\to x$ in $\sigma(E,Y)$. Since each $f_n$ belongs to $Y$, this
implies
\[
        f_n(x_k)\longrightarrow f_n(x)\qquad(n\in\N),
\]
and proves \textup{(ii)}.

It remains to justify the final assertion. The linear span of $(f_n)$ is
dense in $Y$, because finitely supported sequences are dense in $\ell^1$ and
$Y$ is the quotient image of $\ell^1$. If $(z_k)$ is bounded in $E$ and
$f_n(z_k)\to f_n(z)$ for every $n$, then convergence holds first for finite
linear combinations of the $f_n$ and then for every $y\in Y$ by density and
boundedness. Thus $z_k\to z$ in $\sigma(E,Y)$. The converse is immediate.
\end{proof}

\begin{corollary}\label{cor:biduality}
Under the equivalent conditions of Theorem~\ref{thm:scalar-c0}, one has a
canonical isometric identification
\[
        E_{c_0}^{**}\cong E.
\]
\end{corollary}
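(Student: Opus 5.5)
The identification comes from composing two canonical isometries already in hand. By condition (i) of Theorem~\ref{thm:scalar-c0}, the restriction map $\Lambda:Y\to E_{c_0}^*$ is an isometric isomorphism. Its adjoint
\[
        \Lambda^*:E_{c_0}^{**}\longrightarrow Y^*
\]
is therefore also an isometric isomorphism, since adjoints of surjective isometries between Banach spaces are surjective isometries. By the standing hypothesis of Theorem~\ref{thm:scalar-c0}, $J:E\to Y^*$ is an isometric isomorphism. I would then define
\[
        \Phi:=J^{-1}\circ\Lambda^*:E_{c_0}^{**}\longrightarrow E,
\]
which is an onto isometry as a composition of onto isometries.

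It remains to check that $\Phi$ is canonical, i.e.\ compatible with the natural embedding $\kappa_{E_{c_0}}:E_{c_0}\to E_{c_0}^{**}$ and the inclusion $E_{c_0}\subset E$. The computation in the proof of Theorem~\ref{thm:scalar-c0} gives $\Lambda^*\kappa_{E_{c_0}}(u)=Ju$ for every $u\in E_{c_0}$. Hence $\Phi(\kappa_{E_{c_0}}u)=u$, so $\Phi$ extends the identity on $E_{c_0}$.

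I would also record the concrete description of $\Phi$. For $\Psi\in E_{c_0}^{**}$, the element $\Phi(\Psi)$ is the unique $x\in E$ with
\[
        y(x)=\Psi(y|_{E_{c_0}})\qquad (y\in Y).
\]
Under this identification, the weak$^*$ topology of $E_{c_0}^{**}$ corresponds to $\sigma(E,Y)$.

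There is no real obstacle here. The only point needing care is the bookkeeping that the identification is canonical (fixing $E_{c_0}$), and this follows directly from the identity $\Lambda^*\kappa_{E_{c_0}}=J|_{E_{c_0}}$ already established in the proof of Theorem~\ref{thm:scalar-c0}.
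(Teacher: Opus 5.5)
Your proposal is correct and follows the paper's argument: dualize the isometry $\Lambda:Y\to E_{c_0}^*$ to get $E_{c_0}^{**}\cong Y^*$, then compose with $J^{-1}$. Your extra check that $J^{-1}\Lambda^*\kappa_{E_{c_0}}$ is the inclusion $E_{c_0}\hookrightarrow E$ makes explicit the canonicity that the paper only asserts.
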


\begin{proof}
By Theorem~\ref{thm:scalar-c0}, $E_{c_0}^*\cong Y$ isometrically. Taking
duals gives $E_{c_0}^{**}\cong Y^*$. Since $J:E\to Y^*$ is an isometric
isomorphism, the result follows.
\end{proof}

\section{Compact support and a canonical quotient}

Let $(M,d,0)$ be a pointed metric space. We denote by $\Lip_0(M)$ the
Banach space of all real-valued Lipschitz functions $f:M\to\R$ such that
$f(0)=0$, equipped with the norm
\[
        \|f\|_{\Lip}=\sup_{x\ne y}\frac{|f(x)-f(y)|}{d(x,y)}.
\]
The Lipschitz-free space over $M$ is
\[
        \F(M)=\overline{\operatorname{span}}\{\delta(x):x\in M\}
        \subset \Lip_0(M)^*,
\]
where $\langle f,\delta(x)\rangle=f(x)$. It is the canonical predual of
$\Lip_0(M)$:
\[
        \Lip_0(M)=\F(M)^*
\]
isometrically. For $x\ne y$ we write
\[
        m_{x,y}=\frac{\delta(x)-\delta(y)}{d(x,y)}\in\F(M)
\]
for the elementary normalized molecule.

We use the notation
\[
        \Lip_{0,bs}(M)=\{f\in\Lip_0(M):\supp(f)\text{ is bounded}\}
\]
and
\[
        \Lip_{0,c}(M)=\{f\in\Lip_0(M):\supp(f)\text{ is compact}\}.
\]
Here $\supp(f)$ denotes the closed support.

\begin{theorem}[Aliaga--Perneck\'a--Petitjean--Proch\'azka]
\label{thm:APPP}
For every pointed metric space $M$, the space $\Lip_{0,bs}(M)$ is weak-star
dense in $\Lip_0(M)$, that is,
\[
        \overline{\Lip_{0,bs}(M)}^{\sigma(\Lip_0(M),\F(M))}=\Lip_0(M).
\]
\end{theorem}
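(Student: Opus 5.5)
We prove that every $f\in\Lip_0(M)$ is the weak-star limit of a bounded sequence (or net) of functions with bounded support. The plan is to multiply $f$ by Lipschitz cutoffs $h_n$ that decay slowly enough to keep $\|f h_n\|_{\Lip}$ bounded. Fix $f\in\Lip_0(M)$ and assume $f\neq 0$. For $n\in\N$ set
\[
        h_n(x)=\max\Bigl\{0,\min\Bigl\{1,\;2-\tfrac{d(x,0)}{n}\Bigr\}\Bigr\},
\]
so that $h_n=1$ on $B(0,n)$, $h_n=0$ off $B(0,2n)$, and $\Lip(h_n)\le 1/n$. Put $f_n=f h_n$. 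Then $f_n(0)=0$ and $\supp f_n\subset B(0,2n)$ is bounded, so $f_n\in\Lip_{0,bs}(M)$.

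The first step is a uniform bound on $\|f_n\|_{\Lip}$. For $x,y\in M$, write
\[
        f_n(x)-f_n(y)=h_n(x)\bigl(f(x)-f(y)\bigr)+f(y)\bigl(h_n(x)-h_n(y)\bigr).
\]
The first term is at most $\|f\|_{\Lip}\,d(x,y)$. For the second term, assume without loss of generality that $d(y,0)\le d(x,0)$; otherwise swap the roles of $x$ and $y$ in the decomposition. If $y\notin B(0,2n)$, then both $h_n$ values vanish and the term is zero. Otherwise $|f(y)|\le \|f\|_{\Lip}\, d(y,0)\le 2n\|f\|_{\Lip}$, and multiplying by $\Lip(h_n)\le 1/n$ bounds the term by $2\|f\|_{\Lip}\,d(x,y)$. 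Hence $\|f_n\|_{\Lip}\le 3\|f\|_{\Lip}$.

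The second step is convergence. For each $x\in M$ we have $f_n(x)=f(x)$ as soon as $n\ge d(x,0)$, so $\langle f_n,\delta(x)\rangle\to\langle f,\delta(x)\rangle$. By linearity the same convergence holds on $\operatorname{span}\{\delta(x):x\in M\}$, which is norm dense in $\F(M)$ by definition. Since $(f_n)$ is bounded in $\Lip_0(M)=\F(M)^*$, a standard $\varepsilon/3$ argument extends the convergence to all $\mu\in\F(M)$. This is the same density-plus-boundedness argument used at the end of the proof of Theorem~\ref{thm:scalar-c0}. Therefore $f_n\to f$ in $\sigma(\Lip_0(M),\F(M))$, which gives the asserted density.

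The only delicate point is the uniform Lipschitz bound, namely the mixed term $f(y)(h_n(x)-h_n(y))$. It is handled by matching the support radius $2n$ of the cutoff with its slope $1/n$, and by choosing the point that lies in the support (the one closer to $0$) to carry the factor $f$. The weak-star convergence itself is routine once boundedness is available.
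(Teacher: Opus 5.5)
Your proof is correct. The product-rule splitting, the choice of the point closer to $0$ to carry the factor $f$, and the resulting bound $\|fh_n\|_{\Lip}\le 3\|f\|_{\Lip}$ all check out. The density-plus-boundedness step then gives weak-star convergence of a sequence.

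The paper does not prove this statement itself; it cites Lemma 2.2 of \cite{APPP20}. Its own cut-off argument is Lemma~\ref{lem:cutoff}, which takes a different route. It sets the function equal to $f$ on $B(0,R)$ and to $0$ on $M\setminus B(0,2R)$, checks that this is $\|f\|_{\Lip}$-Lipschitz on the union, and extends by McShane. Although that lemma is stated for proper $M$, properness is used only to make $B(0,2R)$ compact. So, combined with the convergence argument of Corollary~\ref{cor:compact-density}, the paper's own machinery already yields the theorem for every pointed metric space.

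The two routes have different strengths.
\begin{itemize}
\item The McShane route gives the sharp bound $\|f_R\|_{\Lip}\le\|f\|_{\Lip}$, and the paper needs this later. It is used in the ``moreover'' clause of Corollary~\ref{cor:compact-density}, in the isometric embedding of Proposition~\ref{prop:G0-quotient}, and in $c_0$-admissibility as used in Corollary~\ref{cor:compact-admissible}, which requires $\sup_k\|f_k\|_{\Lip}\le\|f\|_{\Lip}$.
\item With your constant $3$, the argument of Proposition~\ref{prop:G0-quotient} would only give $\|R\mu\|\ge\frac13\|\mu\|_{\F(M)}$.
\item Your multiplicative route needs no extension theorem and is fully explicit. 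It also keeps $\supp(fh_n)\subset\supp(f)\cap B(0,2n)$, which a McShane extension does not guarantee.
\end{itemize}

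You can get the sharp constant from a product as well by using a logarithmic profile instead of a linear one. Take $h=\phi(d(\cdot,0))$ with $\phi=1$ on $[0,n]$, $\phi(t)=1-\log(t/n)/\log k$ on $[n,kn]$, and $\phi=0$ beyond $kn$. The same estimate then gives $\|fh\|_{\Lip}\le(1+1/\log k)\|f\|_{\Lip}$. Letting $k\to\infty$ and rescaling produces bounded-support approximants of norm at most $\|f\|_{\Lip}$.
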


\begin{proof}
This is \cite[Lemma 2.2]{APPP20}.
\end{proof}

For proper metric spaces, bounded closed sets are compact. Hence
Theorem~\ref{thm:APPP} implies weak-star density of compactly supported
Lipschitz functions. We also recall a norm-controlled cut-off proof, since
it is useful below.

\begin{lemma}[Norm-preserving compactly supported cut-off]
\label{lem:cutoff}
Let $(M,d,0)$ be a pointed proper metric space and let $f\in\Lip_0(M)$.
Put $L=\|f\|_{\Lip}$. For every $R>0$ there exists
$f_R\in\Lip_{0,c}(M)$ such that
\begin{enumerate}[label=\textup{(\arabic*)}]
\item $f_R=f$ on $B(0,R)$;
\item $f_R=0$ on $M\setminus B(0,2R)$;
\item $\|f_R\|_{\Lip}\le \|f\|_{\Lip}$.
\end{enumerate}
\end{lemma}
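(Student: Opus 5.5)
The plan is to build $f_R$ explicitly by clamping $f$ between two opposite tent functions centred at the base point. Set
\[
        h(x)=L\,\bigl(2R-d(x,0)\bigr)^+ ,\qquad x\in M,
\]
and define
\[
        f_R=\max\bigl(\min(f,h),-h\bigr),
\]
which is the same as $\operatorname{sign}(f)\min(|f|,h)$ because $h\ge 0$. The Lipschitz norm will be controlled by the lattice stability of Lipschitz functions. Properties (1) and (2) follow from pointwise comparisons, and compact support comes from properness.

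First I check that $h$ is $L$-Lipschitz. The map $x\mapsto 2R-d(x,0)$ is $1$-Lipschitz by the triangle inequality, and taking the positive part preserves this constant. So $h$ and $-h$ are $L$-Lipschitz, and $f$ is $L$-Lipschitz by definition of $L$. Pointwise minima and maxima of $L$-Lipschitz functions are again $L$-Lipschitz, which gives $\|f_R\|_{\Lip}\le L$, that is, (3). Also $f(0)=0$ and $h(0)=2LR\ge 0$, so $f_R(0)=0$ and $f_R\in\Lip_0(M)$.

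Next come the two localisation properties. For (1), take $x\in B(0,R)$. Then
\[
        |f(x)|=|f(x)-f(0)|\le L\,d(x,0)\le LR\le L\bigl(2R-d(x,0)\bigr)=h(x),
\]
so the clamp is inactive and $f_R(x)=f(x)$. For (2), if $d(x,0)>2R$ then $h(x)=0$, which forces $f_R(x)=0$. Hence $\{f_R\neq 0\}\subset B(0,2R)$, and the closed support is contained in the closed ball $B(0,2R)$. Since $M$ is proper, this ball is compact, so $\supp(f_R)$ is compact and $f_R\in\Lip_{0,c}(M)$.

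There is no real obstacle here. The only point needing care is the choice of slope in $h$: it must equal $L$, so that $h$ dominates $|f|$ on $B(0,R)$ and $\|f_R\|_{\Lip}$ does not exceed $L$. A fixed cut-off multiplier $f\cdot\varphi$ would in general increase the Lipschitz norm, which is why I use truncation instead. (If $L=0$ then $f=0$ and $f_R=0$ works trivially.)
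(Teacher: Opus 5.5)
Your proof is correct, but it takes a different route from the paper.

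The paper defines the function to be $f$ on $B(0,R)$ and $0$ on $M\setminus B(0,2R)$. It checks that this glued function is $L$-Lipschitz on the union, where the only cross term is handled by $|f(x)|\le L\,d(x,0)\le LR\le L\,d(x,y)$. It then invokes the McShane extension theorem to fill in the annulus.

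You avoid any extension theorem. You write $f_R$ down explicitly as the truncation of $f$ between $\pm h$, with $h=L(2R-d(\cdot,0))^+$. Property (3) then follows from the stability of $L$-Lipschitz functions under $\max$ and $\min$. Your decisive estimate $|f(x)|\le L\,d(x,0)\le LR\le h(x)$ on $B(0,R)$ is the same inequality the paper uses for the cross term.

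Your construction gives pointwise control that the paper's does not:
\begin{itemize}
\item $|f_R|\le|f|$, and $f_R$ has the sign of $f$;
\item $|f_R|$ increases pointwise to $|f|$ as $R\to\infty$.
\end{itemize}
An arbitrary $L$-Lipschitz extension, McShane's included, need not satisfy $|f_R|\le|f|$ in the annulus. For instance, McShane's formula can produce nonzero values there even where $f$ vanishes.

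In turn, the paper's route is shorter once McShane is available. It also adapts verbatim to any prescription of values on a set where the glued function is already $L$-Lipschitz.
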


\begin{proof}
Let
\[
        A_R=B(0,R)\cup (M\setminus B(0,2R)).
\]
Define $g_R:A_R\to\R$ by
\[
        g_R(x)=
        \begin{cases}
        f(x),& x\in B(0,R),\\
        0,& x\in M\setminus B(0,2R).
        \end{cases}
\]
The two pieces are disjoint. The map $g_R$ is $L$-Lipschitz on $A_R$. The
only non-trivial case is when $x\in B(0,R)$ and $y\in M\setminus B(0,2R)$.
Then
\[
        |g_R(x)-g_R(y)|=|f(x)|\le Ld(x,0)\le LR,
\]
while
\[
        d(x,y)\ge d(y,0)-d(x,0)\ge 2R-R=R.
\]
Thus $|g_R(x)-g_R(y)|\le Ld(x,y)$.

By the McShane extension theorem \cite{McS34}, $g_R$ admits an
$L$-Lipschitz extension $f_R:M\to\R$. Since $0\in B(0,R)$ and $f(0)=0$, one
has $f_R(0)=0$. Moreover $f_R=f$ on $B(0,R)$ and $f_R=0$ on
$M\setminus B(0,2R)$. Hence $\supp(f_R)\subset B(0,2R)$, which is compact
because $M$ is proper. Therefore $f_R\in\Lip_{0,c}(M)$ and
$\|f_R\|_{\Lip}\le L$.
\end{proof}

\begin{corollary}\label{cor:compact-density}
If $(M,d,0)$ is proper, then $\Lip_{0,c}(M)$ is weak-star dense in
$\Lip_0(M)$. Moreover, for every $f\in\Lip_0(M)$ the approximating net may
be chosen in $\|f\|_{\Lip}B_{\Lip_0(M)}$.
\end{corollary}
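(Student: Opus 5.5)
The natural route is the cut-off of Lemma~\ref{lem:cutoff} applied along $R\to\infty$, so that we get an explicit sequence rather than a net. Fix $f\in\Lip_0(M)$ and, for $R\in\N$, let $f_R\in\Lip_{0,c}(M)$ be the function provided by Lemma~\ref{lem:cutoff}. Properties (1)--(3) give $f_R=f$ on $B(0,R)$ and $\|f_R\|_{\Lip}\le\|f\|_{\Lip}$. Hence the sequence $(f_R)_{R\in\N}$ lies in $\|f\|_{\Lip}B_{\Lip_0(M)}$. It then remains to show that $f_R\to f$ in $\sigma(\Lip_0(M),\F(M))$. This gives both the density assertion and the norm control at once, and Theorem~\ref{thm:APPP} is not needed except as motivation.

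Pointwise convergence is immediate. For each $x\in M$ we have $f_R(x)=f(x)$ as soon as $R\ge d(x,0)$. Consequently $\langle f_R,\mu\rangle\to\langle f,\mu\rangle$ for every finitely supported $\mu\in\operatorname{span}\{\delta(x):x\in M\}$, since such a $\mu$ involves only finitely many points. To pass to arbitrary $\mu\in\F(M)$, I will use an $\varepsilon/3$ argument based on the uniform bound. Given $\mu\in\F(M)$ and $\varepsilon>0$, pick $\nu$ in the span with $\|\mu-\nu\|_{\F(M)}<\varepsilon$. Then
\[
|\langle f_R-f,\mu\rangle|\le |\langle f_R-f,\nu\rangle|+2\|f\|_{\Lip}\,\varepsilon .
\]
Letting $R\to\infty$ and then $\varepsilon\to0$ gives $\langle f_R,\mu\rangle\to\langle f,\mu\rangle$. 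Equivalently, on bounded subsets of $\Lip_0(M)$ the weak-star topology coincides with pointwise convergence on $M$, because $\operatorname{span}\{\delta(x)\}$ is norm dense in $\F(M)$.

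This argument has no real obstacle. The only point needing care is that the approximation must be bounded: without property (3) of Lemma~\ref{lem:cutoff}, pointwise convergence alone would not yield weak-star convergence. Properness is used only to ensure that $\supp(f_R)\subset B(0,2R)$ is compact, so that $f_R\in\Lip_{0,c}(M)$ rather than merely $\Lip_{0,bs}(M)$.
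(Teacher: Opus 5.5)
Your proposal is correct and follows the paper's proof: apply the cut-off of Lemma~\ref{lem:cutoff} as $R\to\infty$, note that $\langle f_R,\mu\rangle=\langle f,\mu\rangle$ for finitely supported $\mu$ once $R$ is large, and extend to all of $\F(M)$ using the norm density of finitely supported elements together with the uniform bound $\|f_R\|_{\Lip}\le\|f\|_{\Lip}$.
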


\begin{proof}
Let $f_R$ be the functions given by Lemma~\ref{lem:cutoff}. If
$\mu=\sum_{j=1}^N a_j\delta(x_j)$ is finitely supported and $R$ is large
enough that all $x_j$ belong to $B(0,R)$, then
$\langle f_R,\mu\rangle=\langle f,\mu\rangle$. Since finitely supported
elements are dense in $\F(M)$ and $(f_R)$ is norm bounded, it follows that
$f_R\to f$ in $\sigma(\Lip_0(M),\F(M))$.
\end{proof}

Let
\[
        G_0(M):=\overline{\Lip_{0,c}(M)}^{\|\cdot\|_{\Lip}}
        \subset \Lip_0(M).
\]

\begin{proposition}\label{prop:G0-quotient}
Let $(M,d,0)$ be proper. The restriction map
\[
        R:\F(M)\longrightarrow G_0(M)^*,
        \qquad R\mu=\mu|_{G_0(M)},
\]
is an isometric embedding. Consequently, its adjoint
\[
        R^*:G_0(M)^{**}\longrightarrow \F(M)^*=\Lip_0(M)
\]
is a contractive weak-star continuous quotient map. Moreover, $R^*$ extends
the canonical inclusion $G_0(M)\hookrightarrow \Lip_0(M)$.
\end{proposition}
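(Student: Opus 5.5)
The plan is to show $\|R\mu\|_{G_0(M)^*}=\|\mu\|_{\F(M)}$ for every $\mu\in\F(M)$. The inequality $\|R\mu\|\le\|\mu\|$ is immediate, since $G_0(M)\subset\Lip_0(M)=\F(M)^*$ carries the Lipschitz norm and restricting a functional can only decrease its norm.

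For the reverse inequality, fix $\mu$ and $\varepsilon>0$. By Hahn--Banach together with $\Lip_0(M)=\F(M)^*$, choose $f\in\Lip_0(M)$ with $\|f\|_{\Lip}\le1$ and $\langle f,\mu\rangle>\|\mu\|-\varepsilon$. Corollary~\ref{cor:compact-density} (in fact the cut-offs $f_R$ of Lemma~\ref{lem:cutoff}) gives functions $f_R\in\Lip_{0,c}(M)\subset G_0(M)$ with $\|f_R\|_{\Lip}\le1$ and $f_R\to f$ weak-star. Hence $\langle f_R,\mu\rangle\to\langle f,\mu\rangle$, and so
\[
\|R\mu\|\ge\sup_R\langle f_R,\mu\rangle\ge\|\mu\|-\varepsilon .
\]
The only point needing care is that weak-star convergence tests against every $\mu\in\F(M)$, not just finitely supported ones. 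This holds by the density and boundedness argument already made in the proof of Corollary~\ref{cor:compact-density}, and it is the single place where the norm control $\|f_R\|\le\|f\|$ is needed. Thus $R$ is an isometric embedding.

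It then remains to handle the adjoint $R^*$.

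\emph{Contractive and weak-star continuous.} $R^*$ is contractive and weak-star to weak-star continuous, as is every adjoint.

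\emph{Quotient map.} Since $R$ is an isometric embedding, $R^*$ maps the closed unit ball of $G_0(M)^{**}$ onto the closed unit ball of $\F(M)^*$. Indeed, given $f\in\Lip_0(M)$, the functional $f\circ R^{-1}$ is defined on $R(\F(M))$ with the same norm as $f$. Extend it by Hahn--Banach to some $\Phi\in G_0(M)^{**}$ with $\|\Phi\|=\|f\|_{\Lip}$. Then $R^*\Phi=f$, so $R^*$ is a quotient map, in fact an exact one.

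\emph{Extension of the inclusion.} For $g\in G_0(M)$ and $\mu\in\F(M)$,
\[
\langle R^*\kappa(g),\mu\rangle=\langle\kappa(g),R\mu\rangle=\langle g,\mu\rangle,
\]
so $R^*\kappa(g)=g$. In other words, $R^*$ extends the inclusion $G_0(M)\hookrightarrow\Lip_0(M)$.
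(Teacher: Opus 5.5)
Your proof is correct and follows essentially the same route as the paper: the isometry of $R$ comes from the norm-controlled weak-star approximation by compactly supported cut-offs, which the paper phrases as weak-star density of $B_{G_0(M)}$ in $B_{\Lip_0(M)}$ combined with weak-star continuity of $f\mapsto\langle f,\mu\rangle$. The remaining claims are the standard adjoint facts, and the only difference is that you spell out the Hahn--Banach argument that the adjoint of an isometric embedding is an exact quotient map, where the paper simply cites this as standard.
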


\begin{proof}
For $\mu\in\F(M)$,
\[
        \|R\mu\|=\sup_{g\in B_{G_0(M)}}|\langle g,\mu\rangle|.
\]
By Corollary~\ref{cor:compact-density}, the unit ball of $G_0(M)$ is
weak-star dense in the unit ball of $\Lip_0(M)$. Since
$f\mapsto \langle f,\mu\rangle$ is weak-star continuous on $\Lip_0(M)$,
\[
        \|R\mu\|=\sup_{f\in B_{\Lip_0(M)}}|\langle f,\mu\rangle|
        =\|\mu\|_{\F(M)}.
\]
Thus $R$ is an isometric embedding. The adjoint of an isometric embedding is
a quotient map of norm one. Finally, if $g\in G_0(M)$ and
$\kappa_{G_0(M)}:G_0(M)\to G_0(M)^{**}$ denotes the canonical embedding,
then, for every $\mu\in\F(M)$,
\[
        \langle R^*\kappa_{G_0(M)}(g),\mu\rangle
        =\langle \kappa_{G_0(M)}(g),R\mu\rangle
        =\langle g,\mu\rangle.
\]
Hence $R^*\kappa_{G_0(M)}(g)=g$ in $\Lip_0(M)$.
\end{proof}

\begin{remark}
The preceding quotient statement is a consequence of the weak-star density
of compact support. It does not imply, and is not meant to imply, a general
identification $\lip_0(M)^{**}\cong\Lip_0(M)$. Compactly supported Lipschitz
functions need not be little Lipschitz functions; for instance, on
$[0,1]$ the function $x\mapsto x(1-x)$ is compactly supported but its
incremental quotients along $(1/k,0)$ tend to $1$. The relationship between compactly supported Lipschitz functions
and the little Lipschitz space on compact metric spaces is
further analysed in \cite{AADM20}, where atomic decompositions
of the predual and biduality results for the pair
$(\mathrm{lip}(K,\rho),\mathrm{Lip}(K,\rho))$ are obtained
in the framework of o-O structures.
\end{remark}

\section{Coordinate families in Lipschitz-free spaces}

We isolate the coordinate condition needed to apply
Theorem~\ref{thm:scalar-c0} to Lipschitz-free spaces.

\begin{definition}\label{def:coordinate-quotient}
Let $(M,d,0)$ be a pointed metric space. A sequence
$\mu=(\mu_n)_{n\in\N}\subset B_{\F(M)}$ is called a \emph{metric quotient
coordinate family} if the summation operator
\[
        T_\mu:\ell^1\longrightarrow \F(M),
        \qquad
        T_\mu((a_n))=\sum_{n=1}^\infty a_n\mu_n,
\]
is a metric quotient map onto $\F(M)$.

For such a family define
\[
        E_{c_0}(\mu)=\left\{f\in\Lip_0(M):
        (\langle f,\mu_n\rangle)_{n\in\N}\in c_0\right\}.
\]
We say that $\mu$ is \emph{$c_0$-admissible} if, for every
$f\in\Lip_0(M)$, there exists a sequence $(f_k)\subset E_{c_0}(\mu)$ such
that
\[
        \sup_k\|f_k\|_{\Lip}\le \|f\|_{\Lip}
\]
and
\[
        \langle f_k,\mu_n\rangle\longrightarrow \langle f,
        \mu_n\rangle
        \qquad(n\in\N).
\]
\end{definition}

If $\mu$ is a metric quotient coordinate family, then its coordinates are
isometrically norming for $\Lip_0(M)$. Indeed,
\[
        \|f\|_{\Lip}
        =\|f\|_{\F(M)^*}
        =\|T_\mu^*f\|_{\ell^\infty}
        =\sup_n |\langle f,
        \mu_n\rangle|.
\]

\begin{proposition}\label{prop:admissible}
Let $(M,d,0)$ be a pointed metric space and let
$\mu=(\mu_n)\subset B_{\F(M)}$ be a $c_0$-admissible metric quotient
coordinate family. Then the canonical restriction map
\[
        \F(M)\longrightarrow E_{c_0}(\mu)^*,
        \qquad
        \eta\longmapsto \eta|_{E_{c_0}(\mu)},
\]
is an isometric isomorphism. Consequently,
\[
        E_{c_0}(\mu)^{**}\cong \Lip_0(M)
\]
canonically and isometrically.
\end{proposition}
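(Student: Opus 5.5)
The plan is to apply Theorem~\ref{thm:scalar-c0} with $E=\Lip_0(M)$ and $f_n=\langle\cdot,\mu_n\rangle$, where each $\mu_n$ is viewed as an element of $\F(M)\subset\Lip_0(M)^{*}$.

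First, the hypotheses of Definition~\ref{def:scalar-setting} hold. Since $\mu_n\in B_{\F(M)}$, we have $\|f_n\|\le 1$. The norming identity $\|f\|_{\Lip}=\sup_n|\langle f,\mu_n\rangle|$ was verified just before the proposition, using that $T_\mu$ is a metric quotient map, so $T_\mu^*$ is an isometry.

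Second, identify $V^\sharp$ and $Y$. For $\xi\in\ell^1$ the series $\sum\xi_n\mu_n$ converges absolutely in $\F(M)$, and $V^\sharp\xi$ is the functional $f\mapsto\langle f,T_\mu\xi\rangle$. Thus, under the canonical embedding $\F(M)\hookrightarrow\Lip_0(M)^{*}$, we get $V^\sharp=T_\mu$. Because $T_\mu$ is onto $\F(M)$, $Y=\F(M)$ as a set. Because $T_\mu$ is a metric quotient map, the quotient norm of $\ell^1/\ker T_\mu$ coincides with the norm of $\F(M)$. Hence $Y=\F(M)$ isometrically, and $J:\Lip_0(M)\to Y^{*}=\F(M)^{*}$ is the canonical duality $\Lip_0(M)=\F(M)^{*}$, which is an isometric isomorphism. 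This is the one step needing care: one must check that $\ker V^\sharp=\ker T_\mu$, using that $\F(M)\to\Lip_0(M)^{*}$ is injective, and that "metric quotient" means the open unit ball maps onto the open unit ball, so that the quotient norms agree exactly rather than up to equivalence.

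Third, observe that $E_{c_0}$ from the theorem is exactly $E_{c_0}(\mu)$. Condition (ii) of Theorem~\ref{thm:scalar-c0} is precisely $c_0$-admissibility, which even supplies the bound $\sup_k\|f_k\|\le\|f\|$. Hence (i) holds: the restriction $\Lambda:Y\to E_{c_0}(\mu)^{*}$ is an isometric isomorphism. Since $Y=\F(M)$, this is exactly the map $\eta\mapsto\eta|_{E_{c_0}(\mu)}$.

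Finally, Corollary~\ref{cor:biduality} gives $E_{c_0}(\mu)^{**}\cong Y^{*}=\F(M)^{*}=\Lip_0(M)$ canonically and isometrically. I expect no real obstacle beyond the identification in the second step.
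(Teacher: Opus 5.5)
Your proposal is correct and follows the paper's proof. Like the paper, you apply Theorem~\ref{thm:scalar-c0} to $E=\Lip_0(M)$ with $f_n=\langle\cdot,\mu_n\rangle$, identify $Y$ with $\F(M)$ through the metric quotient map $T_\mu$ (so that $J$ becomes the canonical duality $\Lip_0(M)=\F(M)^*$), observe that $c_0$-admissibility is exactly condition (ii), and finish with Corollary~\ref{cor:biduality}; your explicit check that $V^\sharp$ is $T_\mu$ followed by the inclusion $\F(M)\subset\Lip_0(M)^*$, so that the kernels and quotient norms agree, simply spells out the identification the paper states in one line.
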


\begin{proof}
Set $E=\Lip_0(M)$ and define
\[
        f_n(g)=\langle g,\mu_n\rangle,
        \qquad g\in E.
\]
The preceding observation shows that $(f_n)$ is a countable norming family.
Moreover, the space $Y$ associated with $(f_n)$ in
Definition~\ref{def:scalar-setting} is canonically $\F(M)$, because
$T_\mu$ is a metric quotient map onto $\F(M)$. Under this identification
$J:E\to Y^*$ is precisely the canonical isometric identification
$\Lip_0(M)=\F(M)^*$.

The $c_0$-subspace associated with the coordinates $(f_n)$ is exactly
$E_{c_0}(\mu)$, and $c_0$-admissibility is precisely condition \textup{(ii)}
of Theorem~\ref{thm:scalar-c0}. The conclusion follows from
Theorem~\ref{thm:scalar-c0} and Corollary~\ref{cor:biduality}.
\end{proof}

\begin{corollary}[A compact-support sufficient condition]
\label{cor:compact-admissible}
Let $(M,d,0)$ be proper and let $\mu=(\mu_n)\subset B_{\F(M)}$ be a metric
quotient coordinate family. If
\[
        \Lip_{0,c}(M)\subset E_{c_0}(\mu),
\]
then $\mu$ is $c_0$-admissible. Hence
\[
        E_{c_0}(\mu)^*\cong \F(M),
        \qquad
        E_{c_0}(\mu)^{**}\cong \Lip_0(M)
\]
canonically and isometrically.
\end{corollary}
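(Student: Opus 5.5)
We plan to verify $c_0$-admissibility directly from the cut-off construction, and then invoke Proposition~\ref{prop:admissible} for the duality conclusions. Fix $f\in\Lip_0(M)$ and, for $k\in\N$, let $f_k:=f_{k}$ be the function provided by Lemma~\ref{lem:cutoff} with $R=k$. By that lemma, $f_k\in\Lip_{0,c}(M)$, so the hypothesis $\Lip_{0,c}(M)\subset E_{c_0}(\mu)$ gives $(f_k)\subset E_{c_0}(\mu)$. The same lemma gives $\|f_k\|_{\Lip}\le\|f\|_{\Lip}$, which is the norm bound required in Definition~\ref{def:coordinate-quotient}.

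It remains to check the coordinatewise convergence $\langle f_k,\mu_n\rangle\to\langle f,\mu_n\rangle$ for each fixed $n$. By the proof of Corollary~\ref{cor:compact-density}, $f_k\to f$ in $\sigma(\Lip_0(M),\F(M))$. The argument there is as follows. For finitely supported $\nu$ one has $\langle f_k,\nu\rangle=\langle f,\nu\rangle$ as soon as $k$ exceeds the radius of the support of $\nu$. Finitely supported elements are norm dense in $\F(M)$, and $(f_k)$ is uniformly bounded, so an $\varepsilon/3$ argument extends the convergence to all of $\F(M)$. Since each $\mu_n$ belongs to $\F(M)$, the coordinatewise convergence follows. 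This establishes that $\mu$ is $c_0$-admissible.

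Finally, we apply Proposition~\ref{prop:admissible}. It shows that the restriction map $\F(M)\to E_{c_0}(\mu)^*$ is an isometric isomorphism, which is the identification $E_{c_0}(\mu)^*\cong\F(M)$. Dualizing and using $\F(M)^*=\Lip_0(M)$ gives $E_{c_0}(\mu)^{**}\cong\Lip_0(M)$, equivalently by Corollary~\ref{cor:biduality}.

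There is no real obstacle here. The only point to be careful about is using a \emph{sequence} $R=k$ rather than a net, which is harmless because the bounds in Lemma~\ref{lem:cutoff} are uniform in $R$. Properness of $M$ is used only to ensure that the cut-offs have compact support.
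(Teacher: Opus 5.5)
Your proof is correct and follows the paper's argument: take the cut-offs of Lemma~\ref{lem:cutoff}, get weak-star convergence as in Corollary~\ref{cor:compact-density}, use the hypothesis to place them in $E_{c_0}(\mu)$, and then apply Proposition~\ref{prop:admissible}. The only difference is that you index the cut-offs directly by $R=k$, which makes the paper's extraction of a sequence from the net (via the separability argument of Theorem~\ref{thm:scalar-c0}) unnecessary.
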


\begin{proof}
Let $f\in\Lip_0(M)$. By Lemma~\ref{lem:cutoff} and
Corollary~\ref{cor:compact-density}, there is a net, and in particular a
coordinatewise approximating family, $(f_R)\subset\Lip_{0,c}(M)$ such that
$\|f_R\|_{\Lip}\le\|f\|_{\Lip}$ and
$f_R\to f$ in $\sigma(\Lip_0(M),\F(M))$. Since each $\mu_n$ belongs to
$\F(M)$,
\[
        \langle f_R,\mu_n\rangle\to\langle f,
        \mu_n\rangle
        \qquad(n\in\N).
\]
The assumption gives $f_R\in E_{c_0}(\mu)$. Choosing a sequence from the
net by the same separability argument as in Theorem~\ref{thm:scalar-c0}
proves $c_0$-admissibility, and Proposition~\ref{prop:admissible} applies.
\end{proof}

\begin{remark}
Corollary~\ref{cor:compact-admissible} explains why the family of all
molecules works in proper discrete spaces: compact sets are finite there.
It also explains why the same family is too large in the presence of
accumulation points. A Lipschitz function may have non-zero macroscopic
slope along infinitely many molecules converging to the same limiting
molecule, so the corresponding coordinate sequence need not lie in $c_0$.
The next section uses a smaller, adapted family of coordinates.
\end{remark}

\section{Gap coordinates for countable proper subsets of the real line}

We now give the main concrete application. It goes beyond the discrete case
while remaining explicit enough to be proved directly from the scalar
criterion.

Let $K\subset [0,\infty)$ be an infinite countable closed subset containing $0$. With the Euclidean metric and base point $0$, the space
$K$ is proper. Let
\[
        R_K:=\sup K\in (0,\infty]
\]
and put
\[
        I_K=
        \begin{cases}
        [0,R_K],& R_K<\infty,\\
        [0,\infty),& R_K=\infty.
        \end{cases}
\]
Let $\mathcal G(K)$ be the family of bounded connected components of
$I_K\setminus K$. Each $I\in\mathcal G(K)$ is an open interval
$I=(a_I,b_I)$ with $a_I,b_I\in K$ and length
\[
        |I|=b_I-a_I>0.
\]
Since $K$ is countable, it has Lebesgue measure zero; hence the intervals in
$\mathcal G(K)$ cover $I_K$ up to a null set. We fix once and for all an
enumeration
\[
        \mathcal G(K)=\{I_j:j\in\N\}.
\]
For $f\in\Lip_0(K)$ define the gap slopes
\[
        \Delta_I f=\frac{f(b_I)-f(a_I)}{b_I-a_I}
        \qquad(I\in\mathcal G(K)).
\]

\begin{lemma}[Gap decomposition of the free space]
\label{lem:gap-FK}
For $I=(a_I,b_I)\in\mathcal G(K)$ put
\[
        \mu_I:=m_{b_I,a_I}
        =\frac{\delta(b_I)-\delta(a_I)}{b_I-a_I}
        \in\F(K).
\]
Then the map
\[
        U:\ell^1(\mathcal G(K))\longrightarrow \F(K),
        \qquad
        U((c_I))=\sum_{I\in\mathcal G(K)}c_I\mu_I,
\]
is an isometric isomorphism onto $\F(K)$. Consequently,
\[
        \|f\|_{\Lip}=
        \sup_{I\in\mathcal G(K)} |\Delta_I f|
        \qquad(f\in\Lip_0(K)).
\]
\end{lemma}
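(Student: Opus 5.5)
We prove the norm identity first and deduce the isometry of $U$ by duality. Since $\|\mu_I\|_{\F(K)}=1$ for every gap, $U$ is well defined and contractive, and $|\Delta_I f|\le\|f\|_{\Lip}$ is immediate. For the reverse inequality, take $x<y$ in $K$ and write $f(y)-f(x)$ as a telescoping sum over the gaps contained in $(x,y)$. Put
\[
        \mathcal G_{x,y}=\{I\in\mathcal G(K): I\subset (x,y)\}.
\]
Since $K$ is countable and closed, $[x,y]\setminus K$ is the disjoint union of the gaps in $\mathcal G_{x,y}$, and
\[
        \sum_{I\in\mathcal G_{x,y}}|I| = y-x
\]
because $K$ has Lebesgue measure zero. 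The key claim is the series representation
\[
        f(y)-f(x)=\sum_{I\in\mathcal G_{x,y}} \bigl(f(b_I)-f(a_I)\bigr).
\]
Granting it, we get
\[
        |f(y)-f(x)|\le \sup_I|\Delta_I f|\,\sum_{I\in\mathcal G_{x,y}}|I| = \sup_I|\Delta_I f|\,(y-x),
\]
which gives $\|f\|_{\Lip}=\sup_I|\Delta_I f|$.

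To prove the series representation we extend $f$ to $F:[x,y]\to\R$ by affine interpolation on each gap. We check that $F$ is Lipschitz with constant at most $\|f\|_{\Lip}$, which is routine by cases on where the two points lie. Hence $F$ is absolutely continuous and
\[
        F(y)-F(x)=\int_x^y F'(t)\,dt.
\]
Because $K\cap[x,y]$ is null, the integral splits over the gaps, and on each gap $I$ it equals $f(b_I)-f(a_I)$. The series converges absolutely, since each term is bounded by $\|f\|_{\Lip}|I|$. This measure-zero step is the one real obstacle. It is exactly where countability is used: for a Cantor-type set of positive measure, or the Cantor function on a null uncountable set, the identity fails, so the argument must use that Lipschitz functions are absolutely continuous and that $K$ is null. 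For countable $K$ one could alternatively argue by transfinite induction on Cantor--Bendixson rank, but the absolute-continuity route is cleaner.

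Next we show that $U$ is an isometry onto $\F(K)$. The adjoint $U^*:\Lip_0(K)\to\ell^\infty(\mathcal G(K))$ is $f\mapsto(\Delta_I f)_I$, and by the norm identity it is an isometry. It follows that $U$ has dense range: a functional vanishing on the range has $U^*f=0$, hence $f=0$. For the lower bound $\|U c\|\ge\|c\|_1$, given finitely supported $c$ we must produce $f$ with $\|f\|_{\Lip}\le1$ and $\Delta_I f=\operatorname{sign}(c_I)$ on the support. We define
\[
        f(t)=\int_0^t \phi(s)\,ds,
\]
where $\phi$ equals $\operatorname{sign}(c_I)$ on gap $I$ and, say, $0$ on the other gaps; $\phi$ is defined almost everywhere since $K$ is null. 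Then $f$ is $1$-Lipschitz with $f(0)=0$, and $\Delta_I f$ is the average of $\phi$ over $I$, which is the prescribed sign. By density, $U$ is an isometry on all of $\ell^1$. Its range is therefore closed and dense, so $U$ is onto, and $U$ is an isometric isomorphism.

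As a consistency check, the integral construction shows that $U^*$ is in fact onto $\ell^\infty$: any bounded $(t_I)$ is realized by taking $\phi=t_I$ on $I$. This is consistent with $U$ being an isomorphism.
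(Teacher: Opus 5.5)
Your proof is correct, but it takes a different route from the paper's, working entirely on the Lipschitz (dual) side.

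\textbf{The paper's route.} The paper imports the standard isometry $\F([0,\infty))\cong L^1[0,\infty)$, $\delta(t)\mapsto\mathbf 1_{[0,t]}$, together with the isometric embedding $\F(K)\hookrightarrow\F([0,\infty))$. Under these, $\mu_I\mapsto\mathbf 1_I/|I|$, and these images are disjointly supported unit vectors, so the isometry of $U$ is immediate. Surjectivity is the a.e.\ identity $\mathbf 1_{[0,t]}=\sum_{I\subset(0,t)}\mathbf 1_I$, that is, $\delta(t)=\sum_{I\subset(0,t)}|I|\mu_I$. The norm formula is obtained last, by duality.

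\textbf{Your route.} You never pass to the $L^1$ model. The lower bound $\|Uc\|\ge\|c\|_1$ comes from the explicit norming functions $\int_0^t\phi$; this is the $L^\infty$-side counterpart of the paper's disjointness computation. Surjectivity comes from Hahn--Banach plus injectivity of $U^*$. That injectivity rests on the telescoping identity $f(y)-f(x)=\sum_{I\subset(x,y)}\bigl(f(b_I)-f(a_I)\bigr)$, which you prove via the piecewise-affine extension and absolute continuity.

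\textbf{What each buys.} The paper's argument is shorter once the standard facts are granted, and it produces the atomic expansion of $\delta(t)$ explicitly. Yours is self-contained: it uses only absolute continuity of Lipschitz functions on an interval and Hahn--Banach. It also establishes directly the telescoping identity that the paper later needs in the proof of Theorem~\ref{thm:gap-dalet}, where the paper instead derives it from this lemma.

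\textbf{A small correction to your aside.} The Cantor function is not Lipschitz on the Cantor set. That example therefore only shows why the Lipschitz/absolute-continuity hypothesis matters; it says nothing about countability. Countability is used only to get Lebesgue nullness. Your argument, like the paper's, works verbatim for every closed null $K\subset[0,\infty)$ containing $0$.
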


\begin{proof}
We use the standard identification of $\F([0,R])$ with $L^1[0,R]$, and of
$\F([0,\infty))$ with $L^1[0,\infty)$, under which
\[
        \delta(t)\longmapsto \mathbf 1_{[0,t]}
        \qquad(t\ge0);
\]
see, for example, \cite{Wea18}. The canonical linearization embeds
$\F(K)$ isometrically as the closed linear span of
$\{\mathbf 1_{[0,t]}:t\in K\}$ in the corresponding $L^1$ space.

More precisely, the isometric embedding $F([0,\infty))\hookrightarrow L^1[0,\infty)$
restricts to an isometric embedding of $F(K)$ as follows.
The inclusion $\iota\colon K\hookrightarrow [0,\infty)$ induces a
norm-one restriction map $\iota^*\colon \Lip_0([0,\infty))\to\Lip_0(K)$
and a canonical isometric embedding
$\iota_*\colon F(K)\hookrightarrow F([0,\infty))$
satisfying $\iota_*(\delta_K(t))=\delta(t)$ for $t\in K$
(see \cite[Proposition~2.2.2]{Wea18}).
Under the identification $F([0,\infty))\cong L^1[0,\infty)$,
the image of $\iota_*$ is precisely the closed linear span of
$\{1_{[0,t]}:t\in K\}$ in $L^1[0,\infty)$, and the embedding is isometric
because $\iota_*$ is an isometric embedding and the identification
$F([0,\infty))\cong L^1[0,\infty)$ is an isometric isomorphism.

Under this identification,
\[
        \mu_I\longmapsto \frac{\mathbf 1_I}{|I|}.
\]
The intervals $I\in\mathcal G(K)$ are pairwise disjoint. Therefore, for
$(c_I)\in\ell^1(\mathcal G(K))$,
\[
        \left\|\sum_I c_I\frac{\mathbf 1_I}{|I|}\right\|_{L^1}
        =\sum_I |c_I|.
\]
Thus $U$ is an isometry into $\F(K)$.

It remains to see that the range is all of $\F(K)$. Let $t\in K$. Since
$K$ has measure zero,
\[
        \mathbf 1_{[0,t]}
        =\sum_{\substack{I\in\mathcal G(K)\\ I\subset (0,t)}}
        \mathbf 1_I
        =\sum_{\substack{I\in\mathcal G(K)\\ I\subset (0,t)}}
        |I|\frac{\mathbf 1_I}{|I|}
\]
in $L^1$, and the coefficients have sum $t<\infty$. Hence
$\delta(t)$ belongs to the closed span of the $\mu_I$. Since the elements
$\delta(t)$ span a dense subspace of $\F(K)$, $U$ is onto.

The norm identity for $\Lip_0(K)=\F(K)^*$ follows by duality, because the
unit ball of $\F(K)$ is the unit ball of the $\ell^1$-sum generated by the
basis $(\mu_I)$.
\end{proof}

Define the gap $c_0$-space
\[
        E_{\rm gap}(K):=
        \left\{f\in\Lip_0(K):
        (\Delta_{I_j}f)_{j\in\N}\in c_0\right\}.
\]
By Lemma~\ref{lem:gap-FK}, the map
\[
        \Gamma:\Lip_0(K)\longrightarrow \ell^\infty(\mathcal G(K)),
        \qquad
        \Gamma f=(\Delta_I f)_{I\in\mathcal G(K)},
\]
is an isometric isomorphism. In particular $E_{\rm gap}(K)$ is isometric to
$c_0(\mathcal G(K))$.

\begin{proposition}[Gap coordinates are $c_0$-admissible]
\label{prop:gap-admissible}
The family $(\mu_{I_j})_{j\in\N}$ is a $c_0$-admissible metric quotient
coordinate family for $K$. Consequently,
\[
        E_{\rm gap}(K)^*\cong \F(K),
        \qquad
        E_{\rm gap}(K)^{**}\cong \Lip_0(K)
\]
canonically and isometrically.
\end{proposition}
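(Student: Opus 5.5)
We will deduce the statement from Corollary~\ref{cor:compact-admissible}, with Lemma~\ref{lem:gap-FK} supplying the structural input.

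\emph{Step 1: the gap molecules form a metric quotient coordinate family.} Each $\mu_{I_j}$ is a normalized molecule, so it lies in $B_{\F(K)}$. Under the enumeration $j\mapsto I_j$, the summation operator $T_\mu$ coincides with the map $U$ of Lemma~\ref{lem:gap-FK}. That map is an isometric isomorphism of $\ell^1$ onto $\F(K)$, so in particular it is a metric quotient map. We also note that $E_{c_0}(\mu)=E_{\rm gap}(K)$ by definition, since $\langle f,\mu_{I_j}\rangle=\Delta_{I_j}f$.

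\emph{Step 2: $\Lip_{0,c}(K)\subset E_{\rm gap}(K)$.} Fix $f\in\Lip_{0,c}(K)$ and suppose $\supp f\subset[0,R]$. We split the gaps into two classes.

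\begin{itemize}
\item Gaps with $a_I> R$ are harmless, because $f$ vanishes at both endpoints and so $\Delta_I f=0$.
\item Gaps with $a_I\le R$ are the remaining ones. For all but one of them we also have $b_I\le R$; the exception is the gap straddling $R$, if any. Hence all of these gaps lie in a bounded interval $[0,R']$.
\end{itemize}

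Since the gaps are disjoint, their lengths are summable over $[0,R']$. So for every $\varepsilon>0$ only finitely many gaps have $|I|\ge\varepsilon$, and $|I_j|\to0$ along the enumeration.

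\emph{Step 3: the main obstacle.} Knowing $|I_j|\to0$ is not enough: a compactly supported Lipschitz function need not have slopes tending to zero on short gaps. For example, $f(t)=t$ on $[0,1]$, cut off beyond, has $\Delta_I f=1$ for every gap inside $[0,1]$. If $K\cap[0,1]$ has infinitely many gaps, the sequence $(\Delta_{I_j}f)$ is then not in $c_0$. So the inclusion of Step 2 fails as stated, and Corollary~\ref{cor:compact-admissible} cannot be applied verbatim.

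I would therefore replace $\Lip_{0,c}$ by the class of functions that are constant near every point except finitely many. Concretely, this is the class of $f$ such that $\Delta_I f\neq0$ for only finitely many $I$. Every such function lies in $E_{\rm gap}(K)$, indeed in its finitely supported part.

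\emph{Step 4: admissibility by truncation in coordinates.} Because $\Gamma$ is an isometric isomorphism onto $\ell^\infty$, the approximation can be done directly in coordinates. Given $f$, define $f_k=\Gamma^{-1}(\Delta_{I_1}f,\dots,\Delta_{I_k}f,0,0,\dots)$. Explicitly,
\[
f_k(t)=\sum_{j\le k,\ I_j\subset(0,t)}|I_j|\,\Delta_{I_j}f .
\]
This formula defines a genuine Lipschitz function on $K$ with $f_k(0)=0$, because $K$ is null. The resulting sequence has the required properties:
\begin{itemize}
\item $\|f_k\|_{\Lip}=\max_{j\le k}|\Delta_{I_j}f|\le\|f\|_{\Lip}$;
\item $(\Delta_{I_j}f_k)_j$ is finitely supported, so $f_k\in E_{\rm gap}(K)$;
\item $\Delta_{I_n}f_k=\Delta_{I_n}f$ for $k\ge n$, so the coordinates converge.
\end{itemize}

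This is exactly $c_0$-admissibility. Proposition~\ref{prop:admissible} then yields $E_{\rm gap}(K)^*\cong\F(K)$ and $E_{\rm gap}(K)^{**}\cong\Lip_0(K)$.

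Alternatively, one can invoke Theorem~\ref{thm:scalar-c0} directly, since $W=c_0$ and $W^\perp=0=\ker V^\sharp$. The only point needing care is surjectivity of $\Gamma$, that is, that every bounded family of slopes integrates to a Lipschitz function on $K$. This follows from $\ell^1$–$\ell^\infty$ duality through $U$.
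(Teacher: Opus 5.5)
Your proof is correct and, apart from the abandoned detour in Steps~2--3, it is the paper's proof. Step~1 is the same appeal to Lemma~\ref{lem:gap-FK}, and Step~4 is the same truncation of the gap slopes pulled back through $\Gamma$; your explicit finite-sum formula for $f_k$ just makes the surjectivity of $\Gamma$ on finitely supported sequences concrete. Your diagnosis that Corollary~\ref{cor:compact-admissible} cannot be used is also right: for infinite compact $K$ one even has $\Lip_{0,c}(K)=\Lip_0(K)\neq E_{\rm gap}(K)$, which is why the paper does not invoke it here.
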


\begin{proof}
By Lemma~\ref{lem:gap-FK}, the summation map associated with
$(\mu_{I_j})$ is an isometric isomorphism from $\ell^1$ onto $\F(K)$, hence
in particular a metric quotient map.

Let $f\in\Lip_0(K)$ and write
\[
        \alpha_j=\Delta_{I_j}f.
\]
For $N\in\N$ define a bounded scalar sequence
\[
        \alpha^{(N)}_j=
        \begin{cases}
        \alpha_j,& j\le N,\\
        0,& j>N.
        \end{cases}
\]

The sequence $(\alpha^{(N)}_j)_{j\in\mathbb{N}}$ belongs to $\ell^\infty(G(K))$,
since $\sup_j|\alpha^{(N)}_j|\le\sup_j|\alpha_j|=\|f\|_{\mathrm{Lip}}<\infty$.
Since $\Gamma\colon \Lip_0(K)\to\ell^\infty(G(K))$ is an isometric isomorphism
(Lemma~5.1), there exists a unique $f_N\in\Lip_0(K)$ such that
$\Gamma f_N = (\alpha^{(N)}_j)_{j\in\mathbb{N}}$, i.e.\
$\Delta_{I_j}f_N = \alpha^{(N)}_j$ for all $j\in\mathbb{N}$,
and $\|f_N\|_{\mathrm{Lip}} = \|(\alpha^{(N)}_j)\|_{\ell^\infty} \le \|f\|_{\mathrm{Lip}}$.
Then $f_N\in E_{\rm gap}(K)$, $\|f_N\|_{\Lip}\le\|f\|_{\Lip}$, and
\[
        \Delta_{I_j}f_N\longrightarrow \Delta_{I_j}f
        \qquad(j\in\N).
\]
Thus the family $(\mu_{I_j})$ is $c_0$-admissible. The conclusion follows
from Proposition~\ref{prop:admissible}.
\end{proof}

We now compare the gap space with Dalet's predual. For a proper pointed
metric space $M$, Dalet considers the space
\[
        S(M)=\left\{ f\in\lip_0(M):
        \lim_{r\to\infty}
        \sup_{\substack{x\text{ or }y\notin B(0,r)\\ x\ne y}}
        \frac{|f(x)-f(y)|}{d(x,y)}=0
        \right\}.
\]
When $M$ is bounded, the second condition is void. Dalet proved that, for
countable proper $M$, $\F(M)$ is isometric to $S(M)^*$ and has the metric
approximation property \cite{Dal15b}. The following result identifies
$S(K)$ with the gap $c_0$-space for countable proper subsets of the real
line.

\begin{theorem}[Gap $c_0$ coordinates and Dalet's predual]
\label{thm:gap-dalet}
Let $K\subset[0,\infty)$ be infinite, countable, closed, and contain $0$. Then
\[
        E_{\rm gap}(K)=S(K)
\]
isometrically. Consequently,
\[
        S(K)^*\cong \F(K),
        \qquad
        S(K)^{**}\cong \Lip_0(K)
\]
canonically and isometrically.
\end{theorem}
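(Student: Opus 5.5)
The plan is to prove the set equality $E_{\rm gap}(K)=S(K)$ directly. Both spaces are subspaces of $\Lip_0(K)$ with the Lipschitz norm, so the equality is automatically isometric. The two displayed identifications then follow at once from Proposition~\ref{prop:gap-admissible}.

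The basic tool is an \emph{averaging identity} extracted from Lemma~\ref{lem:gap-FK}. For $x<y$ in $K$, the $L^1$ picture $\mathbf 1_{[0,y]}-\mathbf 1_{[0,x]}=\sum_{I\subset(x,y)}\mathbf 1_I$ holds a.e. because $K$ is null, and it gives
\[
        m_{y,x}=\sum_{\substack{I\in\mathcal G(K)\\ I\subset(x,y)}}\frac{|I|}{y-x}\,\mu_I ,
        \qquad
        \frac{f(y)-f(x)}{y-x}=\sum_{I\subset(x,y)}\frac{|I|}{y-x}\,\Delta_I f ,
\]
with nonnegative weights summing to $1$ and an absolutely convergent series. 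So every incremental quotient of $f$ is a convex average of the gap slopes of the gaps lying between the two points.

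\emph{Inclusion $S(K)\subset E_{\rm gap}(K)$.} Fix $f\in S(K)$ and $\varepsilon>0$.
\begin{enumerate}[label=\textup{(\arabic*)}]
\item Choose $r$ such that every quotient with an endpoint outside $B(0,r)$ has modulus $<\varepsilon$. Every gap with $b_I>r$ then satisfies $|\Delta_I f|<\varepsilon$.
\item Because $f\in\lip_0(K)$ and $K\cap[0,r]$ is compact, choose $\delta>0$ such that $|f(x)-f(y)|<\varepsilon|x-y|$ whenever $x,y\in K\cap[0,r]$ and $0<|x-y|<\delta$. Every gap inside $[0,r]$ of length $<\delta$ then satisfies $|\Delta_I f|<\varepsilon$.
\item The gaps inside $[0,r]$ of length $\ge\delta$ are pairwise disjoint, so there are at most $r/\delta$ of them.
\end{enumerate}
Hence only finitely many $j$ satisfy $|\Delta_{I_j}f|\ge\varepsilon$, and $(\Delta_{I_j}f)\in c_0$. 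Step (2) uses the little Lipschitz condition uniformly on compacta, which is the form in which Dalet's $\lip_0$ should be read for proper spaces; I would state this reading explicitly.

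\emph{Inclusion $E_{\rm gap}(K)\subset S(K)$.} Fix $f\in E_{\rm gap}(K)$ and $\varepsilon>0$. Let $\mathcal F$ be the finite set of gaps with $|\Delta_I f|\ge\varepsilon$. Put $\delta=\min_{I\in\mathcal F}|I|$, $r_0=\max_{I\in\mathcal F}b_I$, and $L_{\mathcal F}=\sum_{I\in\mathcal F}|I|$.
\begin{itemize}
\item \emph{Little Lipschitz condition.} If $0<y-x<\delta$, no gap of $\mathcal F$ fits inside $(x,y)$. The averaging identity then gives a quotient of modulus $\le\varepsilon$, uniformly on all of $K$.
\item \emph{Flatness at infinity.} Let $r>r_0$, and let $x<y$ with $y>r$. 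If $x\ge r_0$, no gap of $\mathcal F$ lies in $(x,y)$, so the quotient is again $\le\varepsilon$. If $x<r_0$, then $y-x\ge r-r_0$. The total weight carried by the gaps of $\mathcal F$ is at most $L_{\mathcal F}/(r-r_0)$. The averaging identity and Lemma~\ref{lem:gap-FK} (which gives $|\Delta_I f|\le\|f\|_{\Lip}$) then bound the quotient by
\[
        \varepsilon+\|f\|_{\Lip}\,L_{\mathcal F}/(r-r_0).
\]
This tends to $\varepsilon$ as $r\to\infty$.
\end{itemize}
Since $\varepsilon$ is arbitrary, $f\in S(K)$.

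The main obstacle is mostly bookkeeping: making the averaging identity rigorous, which relies on $K$ being null and on the $L^1$ identification in Lemma~\ref{lem:gap-FK}, and fixing the precise meaning of $\lip_0(K)$ for unbounded proper $K$. Once the identity is in place, both inclusions are short estimates. When $R_K<\infty$ the condition at infinity is void and only the little Lipschitz parts are needed.
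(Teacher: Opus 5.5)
Your proposal is correct and follows the paper's proof essentially step by step. You use the same identity $f(y)-f(x)=\sum_{I\subset(x,y)}|I|\,\Delta_I f$ from Lemma~\ref{lem:gap-FK}, the same finite exceptional set $\mathcal F$ with $\delta<\min_{I\in\mathcal F}|I|$ plus a large-$r$ estimate for $E_{\rm gap}(K)\subset S(K)$, and the same ``far gaps, short gaps, finitely many long gaps in $[0,r]$'' count for $S(K)\subset E_{\rm gap}(K)$. The remaining differences are cosmetic: the convex-average phrasing, bounding the $\mathcal F$-contribution by $\|f\|_{\Lip}\sum_{I\in\mathcal F}|I|$ rather than $\sum_{I\in\mathcal F}|\Delta_I f|\,|I|$, and invoking the little Lipschitz condition only on $K\cap[0,r]$, which is harmless because either reading of $\lip_0(K)$ implies it.
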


\begin{proof}
Let $f\in\Lip_0(K)$ and set $\alpha_I=\Delta_I f$. We shall use the
following elementary identity. If $x<y$ are points of $K$, then
\[
        f(y)-f(x)=
        \sum_{\substack{I\in\mathcal G(K)\\ I\subset (x,y)}}
        \alpha_I |I|,
        \qquad
        y-x=
        \sum_{\substack{I\in\mathcal G(K)\\ I\subset (x,y)}} |I|.
\]

Both equalities follow from the $L^1$ representation in Lemma~5.1.
For the first, one writes $f(y)-f(x)=\langle f,\delta_K(y)-\delta_K(x)\rangle$
and expands $\delta_K(y)-\delta_K(x)=\sum_{I\subset(x,y)}|I|\,\mu_I$
in $F(K)\cong\ell^1(G(K))$ (Lemma~5.1).
For the second, note that $K$ has Lebesgue measure zero
(it is countable), so $[x,y]$ is covered up to a null set
by the disjoint gaps $I\in G(K)$ with $I\subset(x,y)$,
giving $y-x=\sum_{I\subset(x,y)}|I|$..

Assume first that $f\in E_{\rm gap}(K)$, so $(\alpha_I)$ belongs to $c_0$.
We prove that $f\in S(K)$. Let $\varepsilon>0$ and choose a finite set
$\mathcal F\subset\mathcal G(K)$ such that
\[
        |\alpha_I|<\varepsilon
        \qquad(I\notin\mathcal F).
\]
To prove the little Lipschitz condition, let
\[
        \delta<\min\{|I|:I\in\mathcal F\}
\]
if $\mathcal F\ne\varnothing$, and take any $\delta>0$ otherwise. If
$x<y$ in $K$ and $y-x<\delta$, then no interval of $\mathcal F$ is contained
in $(x,y)$. The identity above gives
\[
        \frac{|f(y)-f(x)|}{y-x}
        \le
        \sup_{\substack{I\in\mathcal G(K)\\ I\subset(x,y)}}|
        \alpha_I|
        \le \varepsilon.
\]
Thus $f\in\lip_0(K)$.

It remains to prove the vanishing condition at infinity. If $K$ is bounded,
there is nothing to prove. Suppose $K$ is unbounded. Choose the finite set
$\mathcal F$ so that $|\alpha_I|<\varepsilon/2$ for $I\notin\mathcal F$.
Let
\[
        A=1+\max\{b_I:I\in\mathcal F\}
\]
with $A=1$ if $\mathcal F=\varnothing$, and put
\[
        C=\sum_{I\in\mathcal F} |\alpha_I| |I|.
\]
Choose $r>A$ so large that $C/(r-A)<\varepsilon/2$. Let $x<y$ in $K$ with
$y>r$. If $x\ge A$, then $(x,y)$ contains no interval of $\mathcal F$, and
the quotient is at most $\varepsilon/2$. If $x<A$, then the contribution of
intervals outside $\mathcal F$ is at most $(\varepsilon/2)(y-x)$, while the
contribution of intervals in $\mathcal F$ is at most $C$. Hence
\[
        \frac{|f(y)-f(x)|}{y-x}
        \le \frac{\varepsilon}{2}+\frac{C}{y-x}
        \le \frac{\varepsilon}{2}+\frac{C}{r-A}
        <\varepsilon.
\]
Therefore $f\in S(K)$.

Conversely, assume $f\in S(K)$. We prove that $(\alpha_I)\in c_0$. Let
$\varepsilon>0$. If $K$ is unbounded, choose $r>0$ such that
\[
        \frac{|f(x)-f(y)|}{|x-y|}<\varepsilon
\]
whenever $x\ne y$ and $x$ or $y$ lies outside $B(0,r)$. If $K$ is bounded,
choose $r>R_K$. Since $f\in\lip_0(K)$, choose $\delta>0$ such that
\[
        0<|x-y|<\delta
        \quad\Longrightarrow\quad
        \frac{|f(x)-f(y)|}{|x-y|}<\varepsilon.
\]
If $I=(a_I,b_I)$ satisfies $b_I>r$, then $|\alpha_I|<\varepsilon$ by the
vanishing condition at infinity. If $b_I\le r$ and $|I|<\delta$, then
$|\alpha_I|<\varepsilon$ by the little Lipschitz condition. Hence the only
gaps with $|\alpha_I|\ge\varepsilon$ are contained in $[0,r]$ and have
length at least $\delta$. There are only finitely many such disjoint
intervals. Thus $(\alpha_I)\in c_0$.

We have proved $E_{\rm gap}(K)=S(K)$. The duality and biduality statements
now follow from Proposition~\ref{prop:gap-admissible}.
\end{proof}

\begin{example}
Let
\[
        K=\{0\}\cup\left\{\frac1n:n\in\N\right\}
        \subset[0,1].
\]
The gaps are $(1/(n+1),1/n)$, $n\in\N$. Theorem~\ref{thm:gap-dalet} says
that the Dalet predual is the space of all $f\in\Lip_0(K)$ such that
\[
        \frac{f(1/n)-f(1/(n+1))}{1/n-1/(n+1)}\longrightarrow 0.
\]
Thus the correct $c_0$ coordinates near the accumulation point are adjacent
gap slopes, not the long molecules $m_{1/n,0}$.
\end{example}

\begin{remark}
Theorem~\ref{thm:gap-dalet} recovers Dalet's predual only for countable
proper subsets of the real line. It does not recover Dalet's theorem for
arbitrary countable proper metric spaces. In general, selecting a
$c_0$-coordinate family must reflect the accumulation structure of the
space. The gap coordinates provide such a selection in one dimension; for
more general countable proper spaces, this becomes a separate coordinate
selection problem and is not pursued here.
\end{remark}

\section{The proper discrete case}

We finish by recording the discrete molecular case in the language of the
preceding sections. It is included mainly as a comparison with Dalet's
predual.

\begin{proposition}\label{prop:proper-discrete}
Let $(M,d,0)$ be an infinite pointed proper discrete metric space. Let
$\{(p_n,q_n):n\in\N\}$ be an enumeration of all ordered pairs of distinct
points of $M$, and set
\[
        \mu_n=m_{p_n,q_n}
        =\frac{\delta(p_n)-\delta(q_n)}{d(p_n,q_n)}.
\]
Define
\[
        E_{\rm mol}(M)=\left\{f\in\Lip_0(M):
        (\langle f,
        \mu_n\rangle)_{n\in\N}\in c_0\right\}.
\]
Then
\[
        E_{\rm mol}(M)^*\cong \F(M),
        \qquad
        E_{\rm mol}(M)^{**}\cong \Lip_0(M)
\]
canonically and isometrically. Moreover $E_{\rm mol}(M)$ coincides with
Dalet's predual $S(M)$.
\end{proposition}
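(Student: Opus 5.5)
The plan is to apply Corollary~\ref{cor:compact-admissible} to the family $\mu=(\mu_n)$ of all normalized molecules, and then to compare $E_{\rm mol}(M)$ with $S(M)$ directly.

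\textbf{Step 1: the molecules form a metric quotient coordinate family.} A proper space is separable, so $M$ is countable and the enumeration makes sense. Each $\mu_n$ lies in $B_{\F(M)}$. It is standard that the closed unit ball of $\F(M)$ is the closed absolutely convex hull of the molecules. Hence $T_\mu$ maps $B_{\ell^1}$ onto a set whose closure is $B_{\F(M)}$. The usual open-mapping iteration (successive approximation with geometric errors) then shows that $T_\mu(\text{open } B_{\ell^1})$ contains the open unit ball of $\F(M)$. So $T_\mu$ is a metric quotient map. Equivalently, and this suffices, $\|f\|_{\Lip}=\sup_n|\langle f,\mu_n\rangle|$ by the definition of the Lipschitz norm.

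\textbf{Step 2: compactly supported functions have $c_0$ coordinates.} In a proper discrete space every compact set is finite, so I will check $\Lip_{0,c}(M)\subset E_{\rm mol}(M)$. Fix $f$ with finite support $F\subset B(0,\rho)$ and $\varepsilon>0$. Only finitely many pairs $(p,q)$ have both points in $B(0,R)$, since that closed ball is compact and discrete, hence finite. Every other pair has, say, $q\notin B(0,R)$. If also $p\notin F$, the coordinate vanishes. If $p\in F$, then
\[
|\langle f,\mu_n\rangle|=\frac{|f(p)|}{d(p,q)}\le\frac{\|f\|_\infty}{R-\rho}<\varepsilon
\]
once $R$ is large. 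So all but finitely many coordinates are below $\varepsilon$. Corollary~\ref{cor:compact-admissible} then gives $E_{\rm mol}(M)^*\cong\F(M)$ and $E_{\rm mol}(M)^{**}\cong\Lip_0(M)$.

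\textbf{Step 3: $E_{\rm mol}(M)=S(M)$.} I will prove the two inclusions separately, using the same decomposition of pairs.
\begin{itemize}
\item If $f\in E_{\rm mol}(M)$, then for $\varepsilon>0$ only finitely many pairs have slope at least $\varepsilon$. These involve finitely many points, all lying in some $B(0,r)$. So pairs with a point outside $B(0,r)$ have slope below $\varepsilon$, which is the condition at infinity. The little Lipschitz condition also holds: pairs at distance less than $\delta:=$ the minimum distance among those finitely many pairs are not among them.
\item Conversely, if $f\in S(M)$, pick $r$ such that every pair with a point outside $B(0,r)$ has slope below $\varepsilon$. The remaining pairs lie in the finite set $B(0,r)\times B(0,r)$, so only finitely many coordinates are at least $\varepsilon$.
\end{itemize}
Since both spaces carry the $\Lip$ norm, the equality is isometric.

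\textbf{Main obstacle.} The delicate point is Step 1, if the metric quotient property is required in the strict sense of Definition~\ref{def:coordinate-quotient}. I would first try to avoid it by noting that Proposition~\ref{prop:admissible} only needs $Y=V^\sharp(\ell^1)$ to be identified isometrically with $\F(M)$. In the discrete proper case, $V^\sharp(\ell^1)$ is dense in $\F(M)$ and the quotient norm is the free norm, by the closed absolutely convex hull description of the ball. The remaining issue is surjectivity. I expect the geometric-series argument to settle it: approximate $\gamma\in\F(M)$ by $T_\mu\xi_1$ with $\|\xi_1\|_{\ell^1}\le\|\gamma\|+\eta$ and error $\|\gamma-T_\mu\xi_1\|<\eta$, then repeat on the error with $\eta\to\eta/2,\eta/4,\dots$, and sum the resulting $\xi_k$.
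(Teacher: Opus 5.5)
Your proposal is correct and follows essentially the same route as the paper: you check that the molecules form a metric quotient coordinate family, prove $\Lip_{0,c}(M)\subset E_{\rm mol}(M)$ with the finite-support/finite-ball estimate, apply Corollary~\ref{cor:compact-admissible}, and then verify the two inclusions with $S(M)$ directly. The differences are cosmetic. Where the paper cites the Arens--Eells description, you prove the quotient property from the closed absolutely convex hull of the molecules and the open-mapping iteration, so your ``main obstacle'' is already settled by Step 1. For $S(M)\subset E_{\rm mol}(M)$ you rightly note that finiteness of $B(0,r)$ suffices, without the paper's appeal to scales bounded below.
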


\begin{proof}
Since $M$ is proper and discrete, closed balls are finite and $M$ is
countable. The family of all molecules is a metric quotient coordinate
family: the Arens--Eells description of the free norm says that
\[
        \ell^1\ni (a_n)\longmapsto \sum_{n=1}^\infty a_n m_{p_n,q_n}
        \in\F(M)
\]
is a metric quotient map onto $\F(M)$; see, for instance, \cite{GK03,Wea18}.

Every compact subset of $M$ is finite. If $h\in\Lip_0(M)$ has finite
support and $\varepsilon>0$, then only finitely many pairs $(x,y)$ satisfy
\[
        \frac{|h(x)-h(y)|}{d(x,y)}\ge\varepsilon.
\]
Indeed, at least one of $x,y$ must belong to $\supp(h)$, and the other must
lie in a ball of radius $2\|h\|_\infty/\varepsilon$ around that finite
support. These balls are finite. Hence
$\Lip_{0,c}(M)\subset E_{\rm mol}(M)$. Corollary~\ref{cor:compact-admissible}
therefore gives the two duality assertions.

It remains to identify the space with $S(M)$. The condition
$f\in E_{\rm mol}(M)$ says that for every $\varepsilon>0$ there are only
finitely many ordered pairs $(x,y)$ with
\[
        \frac{|f(x)-f(y)|}{d(x,y)}\ge\varepsilon.
\]
This immediately implies the little Lipschitz condition and the vanishing
condition at infinity in Dalet's definition. Conversely, if $f\in S(M)$,
then large quotients can occur only inside a fixed closed ball and at
scales bounded below. Such a ball is finite, because $M$ is proper and
discrete; hence there are only finitely many corresponding pairs. Thus
$E_{\rm mol}(M)=S(M)$.
\end{proof}

\begin{remark}
Dalet's theorem applies to all countable proper metric spaces, and therefore
already implies that $\F(M)$ is a dual space in the proper discrete case.
Proposition~\ref{prop:proper-discrete} should not be read as a new duality
theorem in that setting. Its role is to identify Dalet's predual with the
molecular $c_0$-coordinate space and to show how this identification follows
from the scalar criterion.
\end{remark}

\section*{Acknowledgements and funding}
The author acknowledges the support of INdAM--GNAMPA Project, CUP
E53C25002010001, and PRIN 2022 ``Advanced theoretical aspects in PDEs and
their applications'', project 2022BCFHN2, CUP I53D23002300006.

\end{document}